\newcommand{\excise}[1]{}%{$\star$\textsc{#1}$\star$}
\newtheorem{thm}{Theorem}[section]
\newtheorem{lemma}[thm]{Lemma}
\newtheorem{cor}[thm]{Corollary}
\newtheorem{prop}[thm]{Proposition}
\newtheorem{question}[thm]{Question}
\theoremstyle{definition}
\newtheorem{example}[thm]{Example}
\newtheorem{remark}[thm]{Remark}
\newtheorem{defn}[thm]{Definition}
\numberwithin{equation}{section}
\newcommand{\ring}[1]{\ensuremath{\mathbb{#1}}}
\newcommand\RR{\ring{R}}
\newcommand\ZZ{\ring{Z}}
\def\ol#1{{\overline {#1}}}
\begin{document}%%%%%%%%%%%%%%%%%%%%%%%%%%%%%%%%%%%%%%%%%%%%%%%%%%%%%%%%
%%%%%%%%%%%%%%%%%%%%%%%%%%%%%%%%%%%%%%%%%%%%%%%%%%%%%%%%%%%%%%%%%%%%%%%%

\mbox{}
%\vspace{-2ex}%-1.1743pt}
\title{Sequentially embeddable graphs}

\author[J.~Autry]{Jackson Autry}
\address{Mathematics and Statistics Department\\San Diego State University\\San Diego, CA 92182}
\email{jautry@sdsu.edu}

\author[C.~O'Neill]{Christopher O'Neill}
\address{Mathematics and Statistics Department\\San Diego State University\\San Diego, CA 92182}
\email{cdoneill@sdsu.edu}

\date{\today}

\begin{abstract}
We call a (not necessarily planar) embedding of a graph $G$ in the plane \emph{sequential} if its vertices lie in $\ZZ^2$ and the line segments between adjacent vertices contain no interior integer points.  
In this note, we prove (i) a graph $G$ has a sequential embedding if and only if $G$ is 4-colorable, and (ii) if $G$ is planar, then $G$ has a sequential planar embedding.  
% We also demonstrate $r$-segment hypergraphs for $r \ge 3$ are not characterized by their (strong or weak) chromatic numbers.  
\end{abstract}

\maketitle

% \setcounter{tocdepth}{1}
% \tableofcontents

%%%%%%%%%%%%%%%%%%%%%%%%%%%%%%%%%%%%%%%%%%%%%%%%%%%%%%%%%%%%%%%%%%%%%%%%%
\section{Introduction}%%%%%%%%%%%%%%%%%%%%%%%%%%%%%%%%%%%%%%%%%%%%%%%%%%%
\label{sec:intro}%%%%%%%%%%%%%%%%%%%%%%%%%%%%%%%%%%%%%%%%%%%%%%%%%%%%%%%%
%raggedbottom%%%%%%%%%%%%%%%%%%%%%%%%%%%%%%%%%%%%%%%%%%%%%%%%%%%%%%%%%%%%

A planar embedding of a graph $G$ with $n$ vertices is called a \emph{straight-line} embedding (or an embedding \emph{on the grid}) if each vertex of $G$ lies in $\ZZ^2$ and each edge of $G$ is a line segment.  Every planar graph has such an embedding in the $n \times n$ grid that can be computed in linear time \cite{embedgrid}.  Several refinements of this result have been found, e.g.\ any 3-connected planar graph and its dual can be simultaneously embedded with straight line embeddings in a $(2n - 2) \times (2n - 2)$ grid \cite{embeddualgrid}.  Moreover, there is a recent trend in studying when two planar graphs with the same vertex set admit \emph{geometric RAC simultaneous drawings}, that is, simultaneous straight-line embeddings in which edges from different graphs intersect at right angles \cite{simultaneousplanar,simultaneousfewbends}.  
% evidence from cognitive experiments indicates that increasing the number of crossings does not decrease the readability of a drawing if edges cross at right angles

In this paper, we are interested in embeddings of the following form.  
% consider the following refinement of straight-line embeddings.  

\begin{defn}\label{d:sequential}
A (not necessarily planar) embedding of a graph $G$ in $\RR^d$ is \emph{sequential} if (i) the vertices of $G$ lie in $\ZZ^d$ and (ii) the line segments between adjacent vertices in~$G$ contain no interior integer points.  
\end{defn}

The following question was posed in \cite{rsegment} as a special case of a more general geometric family of hypergraphs arising from discrete geometry (see Section~\ref{sec:rsegment}).  We note that in~\cite{rsegment} it was shown some non-planar graphs have sequential embeddings (e.g.\ any bipartite graph), but not all graphs admit sequential embeddings (e.g.\ the complete graph $K_5$).  

\begin{question}[{\cite[Problem~2.7]{rsegment}}]\label{q:2segmentrealizable}
Which graphs admit a sequential embedding in $\RR^2$?
\end{question}

We provide a complete answer to Problem~\ref{q:2segmentrealizable} by proving that a graph $G$ admits a sequential embedding in $\RR^d$ if and only if $G$ is $2^d$-colorable (Corollary~\ref{c:2segmentrealizable}).  Additionally, we prove that if $G$ is planar, then $G$ admits a sequential planar embedding in $\RR^2$ (that is, a sequential embedding in which no edges cross) computable in linear time from any existing planar embedding (Theorem~\ref{t:planarseq}).  Lastly, in Section~\ref{sec:rsegment} we demonstrate that a characterization in terms of chromatic number is not possible for the more general family of hypergraphs considered in \cite{rsegment}.  
% ``carefully constructed families''

%%%%%%%%%%%%%%%%%%%%%%%%%%%%%%%%%%%%%%%%%%%%%%%%%%%%%%%%%%%%%%%%%%%%%%%%%
\section{A characterization of sequentially embeddable graphs}%%%%%%%%%%%
\label{sec:2segment}%%%%%%%%%%%%%%%%%%%%%%%%%%%%%%%%%%%%%%%%%%%%%%%%%%%%%
%raggedbottom%%%%%%%%%%%%%%%%%%%%%%%%%%%%%%%%%%%%%%%%%%%%%%%%%%%%%%%%%%%%

The main result of this section is Corollary~\ref{c:2segmentrealizable}, which characterizes which graphs admit sequential embeddings in $\RR^d$.  The bulk of the argument is contained in the proof of Theorem~\ref{t:2segmentrealizable}, which answers Question~\ref{q:2segmentrealizable}.  Our results use the following property of sequential line segments.  

\begin{lemma}\label{l:consecutive}
The line segment between $a, b \in \ZZ^d$ contains no interior integer points if~and only if $\gcd(a_1 - b_1, \ldots, a_d - b_d) = 1$.
\end{lemma}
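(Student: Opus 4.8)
The plan is to parametrize the closed segment from $b$ to $a$ as $b + t\dd$, where $\dd = a - b = (a_1 - b_1, \ldots, a_d - b_d)$ and $t$ ranges over $[0,1]$, so that interior points correspond to $t \in (0,1)$. Writing $g = \gcd(d_1, \ldots, d_d)$, I would first dispose of the degenerate case $a = b$ (empty interior) and otherwise assume $\dd \neq \0$, so that $g \geq 1$. The lemma then reduces to the equivalence ``there is an integer point $b + t\dd$ with $t \in (0,1)$ if and only if $g > 1$,'' which I would establish in two directions and then negate.

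For the implication $g > 1 \Rightarrow$ existence, I would simply exhibit the witness $t = 1/g$: since $g \geq 2$ we have $t \in (0,1)$, and the $i$-th coordinate $b_i + d_i/g$ is an integer because $g \mid d_i$ for every $i$. Hence $b + (1/g)\dd$ is an interior integer point of the segment.

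For the converse, I would argue from an arbitrary interior integer point $p = b + t\dd$ that $g > 1$. Coordinatewise, $t d_i = p_i - b_i \in \ZZ$ for each $i$; choosing any index $j$ with $d_j \neq 0$ (which exists since $\dd \neq \0$) shows that $t = (p_j - b_j)/d_j$ is rational. Writing $t = m/n$ in lowest terms with $n \geq 1$ and $\gcd(m,n) = 1$, the constraint $t \in (0,1)$ forces $n \geq 2$; then from $n \mid m d_i$ together with $\gcd(m,n) = 1$ I conclude $n \mid d_i$ for all $i$, so $n \mid g$ and in particular $g \geq n \geq 2$.

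The only step that requires any care is this converse, where one must first justify that $t$ is rational and then pass to the reduced denominator $n$; everything else is a direct divisibility computation. Taking the contrapositive of the two implications yields precisely the stated equivalence, namely that the segment contains no interior integer point exactly when $g = 1$.
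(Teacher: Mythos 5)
Your argument is correct: the witness $t = 1/g$ for the direction $g > 1 \Rightarrow$ interior point, and the reduced-denominator argument (rationality of $t$, then $n \mid m d_i$ with $\gcd(m,n)=1$ forcing $n \mid d_i$ for all $i$) for the converse, together give a complete and standard proof. Note that the paper states Lemma~\ref{l:consecutive} with no proof at all, treating it as folklore, so there is no argument of the authors' to compare against; your writeup would serve as a valid substitute. One small point of care: in the degenerate case $a = b$ the segment has no interior integer points yet $\gcd(0,\ldots,0) = 0 \ne 1$, so the literal biconditional fails there; the right way to ``dispose'' of this case is to observe that in the paper's application $a$ and $b$ are distinct embedded vertices, so $\dd \ne \0$ always holds.
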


% \begin{cor}\label{c:consecutive}
% Any edge in a 2-segment hypergraph has endpoints such that the parity in either the first coordinate differ, or the second.  
% \end{cor}

% \begin{proof}
% Suppose towards a contradiction that there was an edge with endpoints that had the same parity in their first coordinates and the same parity in their second. Then the vector between them has have even coordinates. By Lemma~\ref{l:consecutive}, this is impossible.
% \end{proof}

% It turns out this is a necessary but not sufficient property. Take for example the line segment between $(0,0)$ and $(3,3)$. There are other points along this line. However, this Corollary gives intuition, which is why it was included.

% Now we turn to the theorem. We give a constructive proof.

\begin{thm}\label{t:2segmentrealizable}
If a graph is $4$-colorable, then it admits a sequential embedding in $\RR^2$.
\end{thm}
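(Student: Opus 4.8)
The plan is to reduce the statement to the coprimality criterion of Lemma~\ref{l:consecutive}: by that lemma, it suffices to place the vertices at distinct points of $\ZZ^2$ so that for every edge $uv$ the difference vector $p_v - p_u$ is primitive, i.e.\ $\gcd$ of its two coordinates equals $1$. Fix a proper $4$-coloring of $G$ and identify the four colors with the four parity classes of $\ZZ^2$, that is, with the elements of $(\ZZ/2\ZZ)^2$. We will insist throughout that each vertex $v$ be placed at a point $p_v$ lying in the parity coset determined by its color. Since adjacent vertices receive distinct colors, the key structural payoff of using exactly $2^2$ colors is that the difference vector of any edge is nonzero modulo $2$ in a controlled way; this is what will let us dispose of the prime $2$ for free.

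I would then build the embedding greedily, placing the vertices $v_1, \ldots, v_n$ one at a time while maintaining the parity constraint and injectivity. Suppose $v_k$ is to be placed and let $u^{(1)}, \ldots, u^{(m)}$ be its already-placed neighbors. First choose a horizontal line $y = Y$ with $Y$ in the $y$-parity class dictated by the color of $v_k$ and with $Y$ distinct from the $y$-coordinates of all the $u^{(j)}$; infinitely many such $Y$ exist. Restricting the candidate position to this line reduces the problem to a single integer variable $p_x$ ranging over the arithmetic progression of integers in the $x$-parity class of $v_k$. For each neighbor set $D_j := Y - y_{u^{(j)}} \neq 0$, the requirement is $\gcd(p_x - x_{u^{(j)}}, D_j) = 1$, which fails exactly when $p_x \equiv x_{u^{(j)}} \pmod{q}$ for some prime $q \mid D_j$.

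The crux of the argument is the compatibility of these congruence conditions with the parity of $p_x$, which is already forced by the color of $v_k$. For the prime $q = 2$: if $D_j$ is even then $v_k$ and $u^{(j)}$ share the same $y$-parity, so being differently colored they must differ in $x$-parity, whence $p_x \not\equiv x_{u^{(j)}} \pmod{2}$ holds automatically for every admissible $p_x$; if $D_j$ is odd then $2 \nmid D_j$ imposes no constraint at all. Thus the prime $2$ never obstructs us, and this is precisely where the $2^2$-coloring is used. For each odd prime $q$ dividing some $D_j$, the bad residues form a single class modulo $q$, excluding a fraction $1/q < 1$ of the progression; since only finitely many primes occur across the finitely many neighbors, the Chinese Remainder Theorem shows a positive proportion of the progression avoids all of them. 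Discarding in addition the finitely many already-occupied points preserves injectivity, so a valid $p_x$ exists and we place $v_k$ there. By induction every vertex is placed, yielding a sequential embedding; the main obstacle to watch is exactly the verification that the parity coset forced by the coloring is always consistent with the $q=2$ coprimality condition, which the identification of colors with $(\ZZ/2\ZZ)^2$ guarantees.
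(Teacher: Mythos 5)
Your reduction to Lemma~\ref{l:consecutive} and your handling of the prime $2$ are sound: identifying the four colors with the parity classes of $\ZZ^2$ does dispose of the prime $2$ exactly as you say. The gap is in the treatment of odd primes. You claim that for each odd prime $q$ the bad residues for $p_x$ ``form a single class modulo $q$,'' but one prime $q$ can divide $D_j$ for several different neighbors $u^{(j)}$, each forbidding its own class $x_{u^{(j)}} \bmod q$. The forbidden set modulo $q$ is $\{x_{u^{(j)}} \bmod q : q \mid D_j\}$, which can have as many as $q$ elements, i.e.\ can be all of $\ZZ/q\ZZ$; in that case no choice of $p_x$ on the line $y = Y$ works, and the density/CRT count collapses (the excluded proportion is not a sum of $1/q$ over distinct primes, but can reach $1$ for a single prime). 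Nor is this an artifact of fixing $Y$ first: the real condition for the edge to $u^{(j)}$ is $(p_x, p_y) \not\equiv (x_{u^{(j)}}, y_{u^{(j)}}) \pmod q$, so if the already-placed neighbors of $v_k$ cover all $q^2$ residue pairs modulo some odd prime $q$, then $v_k$ has no valid position anywhere in $\ZZ^2$. Your greedy procedure does not prevent this: take $G = K_{1,9}$ (bipartite, hence $4$-colorable), and place the nine pairwise non-adjacent leaves first. They have no placed neighbors, so your algorithm permits placing them at $(0,0), (0,2), (0,4), (2,0), (2,2), (2,4), (4,0), (4,2), (4,4)$, which stay in a single parity class yet cover all nine residue pairs mod $3$. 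The center vertex then cannot be placed: every $p \in \ZZ^2$ is congruent mod $3$ to some leaf, so that edge's difference vector has both coordinates divisible by $3$ and fails Lemma~\ref{l:consecutive}. So the inductive claim ``a valid $p_x$ exists'' is false as stated; also note that the obstacle you flagged as the main one to watch (consistency at $q=2$) is fine, while the actual failure point is the odd primes.

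This is precisely the difficulty the paper's construction is engineered around. There, color class $i$ is sent into an explicit set $C_i$ whose $x$-coordinates are the constants $0,1,2,3$, so every edge's difference vector has $x$-coordinate in $\{1,2,3\}$ and only the primes $2$ and $3$ could ever divide the gcd; the $y$-coordinates are then structured modulo $2$ and modulo $6$ so that both of those primes are ruled out, for every pair of classes at once. If you want to salvage an inductive placement argument, you need a stronger invariant than parity alone --- some finite-modulus control on all placements (for instance, confining vertices to a strip of bounded width, or prescribing each vertex's residues modulo $6$ as a function of its color) so that only boundedly many primes can ever interfere and each is excluded structurally rather than by counting.
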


\begin{proof}
Fix a graph $G$ with a proper coloring by $\{1, 2, 3, 4\}$, and let 
$$\begin{array}{r@{}c@{}l@{\qquad}r@{}c@{}l}
C_1 &{}={}& \{(0, 6n) : n \in \ZZ\},
& 
C_2 &{}={}& \{(1, 2n) : n \in \ZZ\},
\\
C_3 &{}={}& \{(2, 1 + 2n) : n \in \ZZ\},
&
C_4 &{}={}& \{(3, 1 + 6n) : n \in \ZZ\}.
\end{array}$$
% Note that the points in $A$ are of the form (even, even), $B$ of the form (odd, even), $C$ of the form (even, odd), $D$ of the form (odd, odd).  
Consider any (not necessarily planar) embedding of $G$ in $\RR^2$ in which each vertex of color $i$ is sent to a distinct element of $C_i$.  To prove this embedding is sequential, it suffices to show any line segment from a point in $C_i$ to a point in $C_j$ is sequential.  

There are clearly no other points on a line segment from $C_i$ to $C_{i+1}$ for each $i \le 3$, as the $x$-coordinates of any points therein differ by exactly 1.  Moreover, any line segment from $C_1$ to $C_3$ or from $C_2$ to $C_4$ has vector coordinates of the form $(2,1 + 2n)$ for some $n \in \ZZ$, whose coordinates are clearly relatively prime.  
% By Lemma~\ref{l:consecutive}, there are no other points on these line segments.  
Lastly, any line segment from $C_1$ to $C_4$ has vector coordinates of the form $(3,1 + 6n)$ for some $n \in \ZZ$.  In every case, the proof is complete by Lemma~\ref{l:consecutive}.
% EXTRA REQUIREMENT: It remains to show that no two segments lie on the same line in $\RR^2$.  However, if a line intersects $C_i$, $C_j$, and $C_k$ for some $i < j < k$, then some line segment from a point in $C_i$ to a point in $C_k$ has an interior integer point, which contradicts claim~(i).  This completes the proof.  
% By examining the vectors in the above paragraph, 
% Any line in the plane that coincides with a line segment in the graph has consecutive points in $\ZZ^2$ that change in the $x$-coordinate by either 1,2, or 3, and whose $y$-coordinate either changes in parity or doesn't. The line segments that share both properties are either from $C_1$ to $C_2$ and $C_3$ to $C_4$, or from $C_1$ to $C_3$ and $C_2$ to $C_4$. Any line passing through $C_1$ and $C_2$ only have points in $\ZZ^2$ with even $y$-coordinates, while lines through $C_3$ and $C_4$ have odd $y$-coordinates.  If a line segment from $A$ to $C$ was on the same line as from $B$ to $D$, then there would a line segment from $C_1$ to $C_4$ with four points on it, which contradicts claim~(i). Thus each line segment lies on a different line in the plane.  
\end{proof}

\begin{cor}\label{c:2segmentrealizable}
A graph $G$ has a sequential embedding in $\RR^d$ if and only if $G$ is $2^d$-colorable.
\end{cor}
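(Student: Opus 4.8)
The plan is to reduce everything to the primitivity criterion of Lemma~\ref{l:consecutive}: for an embedding $\iota\colon V(G)\to\ZZ^d$, the edge $uv$ is sequential precisely when the difference vector $\iota(u)-\iota(v)$ is primitive, i.e.\ the $\gcd$ of its coordinates equals $1$. Both implications of the corollary then become statements about primitive difference vectors.

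For the forward (``only if'') direction I would color each vertex $v$ by the parity vector $\iota(v)\bmod 2\in(\ZZ/2\ZZ)^d$, a palette of exactly $2^d$ colors. If two adjacent vertices $u,v$ received the same color, then every coordinate of $\iota(u)-\iota(v)$ would be even; since $\iota$ is injective this vector is nonzero, so the $\gcd$ of its coordinates is a positive even integer, hence at least $2$. By Lemma~\ref{l:consecutive} the segment between $\iota(u)$ and $\iota(v)$ would contain an interior integer point, contradicting sequentiality. Thus the parity coloring is proper and $G$ is $2^d$-colorable, and this argument needs no hypothesis on $d$.

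For the converse I would induct on $d$, taking the base case $d=2$ to be exactly Theorem~\ref{t:2segmentrealizable}: its four classes $C_1,\dots,C_4$ are infinite and, as its proof verifies, \emph{every} segment between two distinct classes is sequential. For the step from $d$ to $d+1$, suppose we have pairwise cross-primitive infinite classes $\{C_c\}_{c\in\{0,1\}^d}\subseteq\ZZ^d$, meaning $\gcd(a-a')=1$ for all $a\in C_c$, $a'\in C_{c'}$ with $c\neq c'$. I append one bit to a new final coordinate, setting $C_{(c,b)}:=C_c\times\{b\}$ for $b\in\{0,1\}$. Two classes with different final bit have last-coordinate difference $\pm1$, so their cross-differences are automatically primitive; two classes sharing a final bit differ only in the first $d$ coordinates and inherit primitivity from the inductive hypothesis. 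These $2^{d+1}$ classes are pairwise disjoint and infinite, so given a proper $2^{d+1}$-coloring of $G$ (identified with a map $V(G)\to\{0,1\}^{d+1}$) I send the vertices of color $(c,b)$ to distinct points of $C_{(c,b)}$; every edge joins two distinct classes and is therefore sequential by construction.

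The main obstacle is securing the base case in the precise strong form the induction consumes: I need every cross-class segment to be sequential and each class to be infinite, so that arbitrarily large color classes can be realized with distinct vertices. This is exactly what Theorem~\ref{t:2segmentrealizable} delivers, so the induction bottoms out cleanly. The only genuinely degenerate value is $d=1$, where no two infinite subsets of $\ZZ$ can have all pairwise differences equal to $\pm1$; accordingly the converse should be read for $d\ge2$, while the forward direction above holds for every $d$.
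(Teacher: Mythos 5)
Your proof is correct and takes essentially the same approach as the paper: the forward direction is the identical parity-coloring argument via Lemma~\ref{l:consecutive}, and your inductive step of appending one $\{0,1\}$ bit per new coordinate produces exactly the point classes $\{(c,v) : c \in C_i\}$, $v \in \{0,1\}^{d-2}$, that the paper constructs in one stroke, verified by the same two-case primitivity check. Your explicit remark that the converse requires $d \ge 2$ (it fails for $d = 1$, e.g.\ for the star $K_{1,3}$) is a small refinement the paper leaves implicit.
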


\begin{proof}
First, suppose $G$ has a sequential embedding, and color the vertices of $G$ using the projection map 
% $\ZZ^d \to (\ZZ/2\ZZ)^d$
$$\begin{array}{rcl}
\ZZ^d &\longrightarrow& (\ZZ/2\ZZ)^d \\
a &\longmapsto& (\ol a_1, \ldots, \ol a_d).
\end{array}$$
This yields a proper $2^d$-coloring of $G$ since endpoints of edges in $G$ must have distinct coordinate parity combinations by Lemma~\ref{l:consecutive}.  

For the converse direction, we will use the construction in the proof of Theorem~\ref{t:2segmentrealizable}.  Specifically, define $C_1, \ldots, C_4$ as in the proof of Theorem~\ref{t:2segmentrealizable}, and consider the sets
$$
\{(c,v) : c \in C_i\} \quad \text{for each} \quad v \in \{0,1\}^{d-2} \quad \text{and} \quad i = 1, 2, 3, 4.
% \!\!\!\!\!\!\!\!,
$$
We see any line segment between points in distinct sets above contains no interior integer points by Lemma~\ref{l:consecutive} (indeed, if the endpoints differ past the second coordinate, they differ by 1 in that coordinate, and if the endpoints are identical past the second coordinate, then use Theorem~\ref{t:2segmentrealizable}), which completes the proof.  
\end{proof}

%%%%%%%%%%%%%%%%%%%%%%%%%%%%%%%%%%%%%%%%%%%%%%%%%%%%%%%%%%%%%%%%%%%%%%%%%
\section{Planar embeddings of 2-segment hypergraphs}%%%%%%%%%%%%%%%%%%%%%
\label{sec:planar}%%%%%%%%%%%%%%%%%%%%%%%%%%%%%%%%%%%%%%%%%%%%%%%%%%%%%%%
%raggedbottom%%%%%%%%%%%%%%%%%%%%%%%%%%%%%%%%%%%%%%%%%%%%%%%%%%%%%%%%%%%%

In this section, we show that every planar graph has a sequential planar embedding.  Our proof uses Theorem~\ref{t:straightline} (see~\cite{embedgrid}), the 4-color theorem, and a construction similar to the proof of Theorem~\ref{t:2segmentrealizable}.  

\begin{example}\label{e:planarnotplanar}
For some planar graphs, the proof of Theorem~\ref{t:2segmentrealizable} will never yield a planar embedding, regardless of the 4-coloring chosen.  Indeed, consider the graph~$G$ depicted in Figure~\ref{f:3segment}, obtained from $K_6$ (the complete graph on 6 vertices) by removing 3 disjoint pairs of non-adjacent vertices $\{a_1, a_2\}$, $\{b_1, b_2\}$, and $\{c_1, c_2\}$.  In any proper 4-coloring of $G$, at least two of these pairs (say, $\{a_1, a_2\}$ and $\{b_1, b_2\}$) must be monochromatic, so under the construction in Theorem~\ref{t:2segmentrealizable}, $a_1$ and $a_2$ must have the same $x$-coordinate, as must $b_1$ and $b_2$.  However, the resulting embedding cannot be planar with the given coordinate constraints, since the induced subgraph of $G$ on the vertex set $\{a_1, a_2, b_1, b_2\}$ is the complete bipartite graph $K_{2,2}$.  
\end{example}

\begin{thm}[{\cite[Theorem~1.2]{embedgrid}}]\label{t:straightline}
Every planar graph has a straight-line embedding.  
\end{thm}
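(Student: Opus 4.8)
The statement is Fáry's theorem, and the plan is to prove it by induction on the number of vertices $n$, after first reducing to maximal planar graphs. Given any planar graph $G$, I would fix a combinatorial planar embedding and add edges until every face, including the outer face, is a triangle; any straight-line drawing of the resulting triangulation restricts to one of $G$ once the added edges are deleted, so it suffices to draw triangulations. The base case $n \le 3$ is a single triangle, which we draw as the outer face $xyz$.

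For the inductive step I would locate a low-degree interior vertex. A triangulation on $n$ vertices has exactly $3n - 6$ edges, so $\sum_v (6 - \deg v) = 12$; since a maximal planar graph on $n \ge 4$ vertices is $3$-connected, the three outer vertices each have degree at least $3$ and contribute at most $9$ to this sum, forcing some interior vertex $v$ to have degree at most $5$. Deleting $v$ leaves its neighbors as a cycle $C$ bounding an empty polygonal region with at most five sides. I would retriangulate that region with diagonals, apply the induction hypothesis to the smaller triangulation to obtain a straight-line drawing, then delete the added diagonals to expose $C$ as a straight-edged simple polygon, and finally reinsert $v$ in its interior.

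The main obstacle is that $C$ need not be convex, so a priori there may be no point inside $C$ that sees all of its vertices. The crux is the observation that every triangulation of a simple polygon on at most five vertices is a fan, i.e.\ all of its diagonals emanate from a single vertex $a$; because these diagonals are drawn inside $C$ without crossings, $a$ sees every other vertex of $C$, so the kernel of $C$ is nonempty and contains a neighborhood of $a$. Placing $v$ there and joining it by straight segments to all its neighbors preserves planarity and completes the induction. The hypothesis $\deg(v) \le 5$ is used precisely here: a hexagonal link can be triangulated without a fan and need not be star-shaped, which is why one first secures a vertex of degree at most five.

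Finally, since the cited source establishes the stronger grid version used later in Theorem~\ref{t:planarseq}, I would alternatively run the de Fraysseix--Pach--Pollack shift algorithm: compute a canonical ordering $v_1, \ldots, v_n$ of the triangulation for which every prefix induces a $2$-connected graph whose outer boundary is a cycle through $v_1 v_2$, then add the vertices one at a time on the integer grid, translating a carefully chosen set of already-placed vertices apart to open a $45^\circ$ wedge for each new vertex. The principal difficulty in this route is verifying that each shift preserves planarity while maintaining the invariant that boundary edges have slopes $\pm 1$; this is what simultaneously yields the $O(n) \times O(n)$ grid size and the linear running time.
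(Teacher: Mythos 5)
A preliminary point: the paper does not prove this statement at all --- it is imported by citation from Schnyder's grid-embedding paper --- so your proposal can only be measured against the statement itself and against how the paper uses it. That comparison exposes the main issue: in this paper, \emph{straight-line embedding} is defined (in the introduction) to require every vertex to lie in $\ZZ^2$, and this integrality is precisely what the proof of Theorem~\ref{t:planarseq} consumes when it dilates the drawing and perturbs each vertex to a nearby lattice point of a prescribed congruence class. Your primary argument, the classical F\'ary induction, produces real coordinates only, hence proves a strictly weaker statement than the one the paper states and uses. This is repairable --- for a fixed graph, being a crossing-free straight-line drawing is an open condition on the tuple of vertex positions, so one may perturb all vertices to rational points and clear denominators --- but that step must be said. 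Your alternative route, the de Fraysseix--Pach--Pollack shift algorithm, would give the grid statement (and the $O(n) \times O(n)$ bound and linear running time that the introduction advertises), but as written it is a description of the algorithm with its key invariant explicitly left unverified, so it is an outline rather than a proof.

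Within the F\'ary induction there is also a genuine misstatement at the crucial step. The kernel of the polygon $C$ never contains a full neighborhood of the fan center $a$: the kernel is contained in the closed inner half-planes of the two polygon edges incident to $a$, whose boundary lines pass through $a$, so near $a$ it is at most a wedge with apex $a$. (Concretely, a non-convex quadrilateral has a unique triangulation, namely the fan from its reflex vertex $a$, and its kernel is the wedge bounded by the extensions of the two edges at $a$ --- not a neighborhood of $a$.) What you need, and what is true, is that the kernel contains points of the \emph{interior} of $C$ arbitrarily close to $a$. Two observations repair the argument: first, $a$ sees every \emph{point} of $C$, not merely every vertex, since $C$ is the union of the fan triangles, each of which is convex and contains $a$; second, no edge of $C$ can be collinear with $a$, since otherwise one of the drawn diagonals or edges at $a$ would pass through another vertex of $C$, contradicting that the inductive drawing is a plane drawing --- hence near $a$ the only half-plane constraints cutting the kernel are those of the two edges at $a$, and these meet in a nondegenerate cone with apex $a$ that lies inside $C$ near $a$. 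Placing $v$ in that cone, close to $a$, completes the inductive step. With these two repairs, plus the rational-perturbation step above, your first route becomes a correct proof of the theorem as this paper defines it; as submitted, both halves of the proposal stop short of that.
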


\begin{thm}\label{t:planarseq}
Every planar graph has a sequential planar embedding.  
\end{thm}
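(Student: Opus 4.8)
The plan is to start from a straight-line planar embedding of $G$, which exists by Theorem~\ref{t:straightline}, and then perturb the integer coordinates so that every edge becomes sequential while preserving planarity. The difficulty highlighted in Example~\ref{e:planarnotplanar} is that one cannot simply apply the coloring-based construction of Theorem~\ref{t:2segmentrealizable}, since forcing monochromatic vertex pairs onto a common $x$-coordinate can destroy planarity. So the key idea is to decouple the sequentiality requirement from the coloring: rather than placing vertices on prescribed columns, I would take the given planar drawing and rescale it, then apply a local coordinate adjustment that fixes the $\gcd$ condition edge by edge without altering the combinatorial (and hence topological) structure of the embedding.

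Concretely, here are the steps I would carry out. First, invoke Theorem~\ref{t:straightline} to obtain a straight-line embedding of $G$ with vertices at distinct points $p_1, \ldots, p_n \in \ZZ^2$. Second, scale the whole picture by a large integer factor $N$, sending each $p_i$ to $N p_i$; scaling is an affine map, so it preserves planarity and keeps vertices on the integer lattice, while creating ``room'' around each vertex. Third, I would argue that a sufficiently small integer perturbation of each vertex — replacing $N p_i$ by $N p_i + \delta_i$ with $\delta_i$ bounded by a constant independent of $N$ — can simultaneously (a) keep the drawing planar, because for $N$ large the perturbed edges stay within thin tubes around the original edges and the combinatorial crossing pattern is unchanged, and (b) be chosen so that each edge difference vector has coordinate $\gcd$ equal to $1$. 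For step (b), the cleanest route is to perturb so that at least one coordinate of each edge's difference vector becomes odd, or more robustly to make the two coordinates of each difference vector coprime by adjusting one endpoint; by Lemma~\ref{l:consecutive} this is exactly the sequentiality condition.

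I expect the main obstacle to be carrying out step three rigorously, i.e.\ simultaneously satisfying the coprimality condition on \emph{all} edges with a single consistent choice of perturbations, since adjusting an endpoint to fix one incident edge may spoil another. To handle this I would process vertices one at a time: after scaling by $N$, consider the vertices in some order and for each vertex choose a small offset so that all already-fixed incident edges become sequential, using the fact that the residues making a difference vector non-coprime form a sparse (measure-zero in the relevant sense) set of forbidden offsets, so a valid offset of bounded size always exists. The planarity half of step three is comparatively routine once $N$ is taken large enough relative to the (bounded) perturbation sizes: a standard continuity/general-position argument shows that small displacements of vertices in a straight-line planar drawing cannot introduce new edge crossings. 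Finally, I would remark that the whole procedure — planar embedding, scaling, and the greedy per-vertex adjustment — runs in linear time given an input planar embedding, matching the claim in the introduction.
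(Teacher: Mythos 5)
Your overall skeleton --- a straight-line embedding via Theorem~\ref{t:straightline}, scaling to create room, then a bounded perturbation that preserves planarity --- matches the paper's proof. But the heart of your argument, the greedy per-vertex offset choice in step three, has a fatal gap: the claim that the offsets making a difference vector non-coprime form a ``sparse (measure-zero)'' set is false. Non-primitivity is not a sparse condition: already for the prime $2$, the offsets $\delta$ for which $p + \delta - q_j$ has both coordinates even form an entire residue class modulo $2$, of density $1/4$ (and the overall density of non-coprime integer vectors is $1 - 6/\pi^2 \approx 0.39$). Consequently, if a vertex $v$ has four already-placed neighbors occupying all four parity classes of $(\ZZ/2\ZZ)^2$, then \emph{no} integer point whatsoever can serve as the image of $v$: every $x \in \ZZ^2$ is congruent modulo $2$, in both coordinates, to some neighbor $q_j$, so $x - q_j$ has both coordinates even and fails Lemma~\ref{l:consecutive}. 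This scenario is not contrived. Take the wheel $W_4$ (a $4$-cycle plus a center adjacent to all four rim vertices, a planar graph): if the center is processed last, the greedy scheme may legitimately place the four rim vertices in the four distinct parity classes --- such a placement is fully consistent with all rim edges being sequential --- and then the center cannot be placed at all, no matter how large an offset is allowed. Nor can a clever processing order rescue you in general: the last vertex in any ordering has all of its neighbors already placed, and planar graphs can be $5$-regular (the icosahedron), so you cannot keep the number of already-placed neighbors below four.

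The deeper point is that your stated goal of ``decoupling the sequentiality requirement from the coloring'' is impossible: by the first half of the proof of Corollary~\ref{c:2segmentrealizable}, any sequential embedding induces a proper $4$-coloring of $G$ via the parity map $\ZZ^2 \to (\ZZ/2\ZZ)^2$, so every correct construction must, at least implicitly, coordinate parities globally --- that is, produce a proper $4$-coloring. This is exactly how the paper proceeds: it keeps your first two steps (and your $\epsilon$-box planarity argument), but replaces the greedy step by the four color theorem. It fixes a proper $4$-coloring, chooses four lattice-like sets $C_1, \ldots, C_4$ lying in distinct parity classes, and perturbs each color-$i$ vertex into $C_i$ inside its box. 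There is also a second difficulty your proposal overlooks: for edges joining far-apart vertices the difference vector has large entries, and one must still certify the gcd condition. The paper handles this with an arithmetic trick: after stretching vertically by $D = M!$, where $M$ exceeds every relevant $x$-coordinate, every $x$-difference between points of distinct $C_i$'s divides $D$, so each gcd computation collapses to a gcd of two small constants. If you want to salvage a perturbation argument, you must begin from a proper $4$-coloring and prescribe the parity class of each vertex accordingly; at that point you will have essentially reconstructed the paper's proof.
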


\begin{proof}
Fix a planar graph $G$.  By Theorem~\ref{t:straightline}, $G$ has a planar embedding in which vertices lie in the integer lattice $\ZZ^2$, and edges are straight line segments.  Since $G$ has only finitely many vertices, there is some $\epsilon > 0$ so that each vertex can be independently moved within an $\epsilon \times \epsilon$ square without straight lines between adjacent vertices intersecting.  The planar embedding of $G$ can be freely scaled horizontally and (independently) vertically while preserving planarity.  First, dilate the embedding horizontally by some sufficiently large integer factor so the rectangular region around each vertex has width at least 12, and then translate the embedding as necessary to ensure the rectangular region around each vertex lies in the 1st quadrant.  Let $D = M!$, where $M$ is some integer greater than the $x$-coordinate of every integer point in the union of the rectangular regions around the vertices of $G$, and stretch the embedding of $G$ vertically so that the rectangular region around each vertex has height at least $D$.  Let
$$\begin{array}{r@{}c@{}l@{\qquad}r@{}c@{}l}
C_1 &{}={}& \{(12a ,Db) : a, b \in \ZZ\},
& 
C_2 &{}={}& \{(12a + 2, Db + 1) : a, b \in \ZZ\},
\\
C_3 &{}={}& \{(12a + 5, Db + 2): a, b \in \ZZ\},
&
C_4 &{}={}& \{(12a + 7, Db + 3): a, b \in \ZZ\},
\end{array}$$
so that the rectangular region around each vertex contains at least one point from each of the above sets.  By the four color theorem \cite{4color1,4color2}, $G$ can be properly colored by $\{1, 2, 3, 4\}$, and subsequently we can perturb each color-$i$ vertex $v$ so that it lies in~$C_i$.  
As in the proof of Theorem~\ref{t:2segmentrealizable}, we will show that any line segment between a point in $C_i$ and a point in $C_j$ with $i \ne j$ has no interior integer points.

% Lines from $C_2$ to $C_1$ will have the difference $(12a + 2, Db + 1) - (12a', Db') = (12(a-a') + 2, D(b-b') + 1)$. Again, $D$ is nice, so we consider $\gcd(12(a-a') + 2, 1) = 1$. It is similar for $C_3$ to $C_2$ and $C_4$ to $C_3$.
The vector between any two points $(12a', Db') \in C_1$ and $(12a + 5, Db + 2) \in C_3$ is 
$$(12a + 5, Db + 2) - (12a', Db') = (12(a-a') + 5, D(b-b') + 2).$$
By construction, $(12(a-a') + 5) \mid D$, so 
$$\gcd(12(a-a') + 5, D(b-b') + 2) = \gcd(12(a-a') + 5, 2) = \gcd(5,2) = 1,$$
thereby proving any line segment from a point in $C_1$ to a point in $C_3$ is sequential by Lemma~\ref{l:consecutive}.  
By a similar argument, any line segment between a point in $C_i$ and a point in $C_j$ for $i \ne j$ contains no interior integer points, thereby completing the proof.  
% Lines from $C_4$ to $C_1$ will have the difference $(12a + 7, Db + 3) - (12a', Db') = (12(a-a') + 7, D(b-b') + 3)$. But $D$ has $12(a-a')$ as a factor, so when considering the gcd of these, we instead consider $\gcd(12(a-a') + 7, 3) = \gcd(7,3) = 1$.
\end{proof}

\begin{figure}
\begin{center}
\includegraphics[height=1.3in]{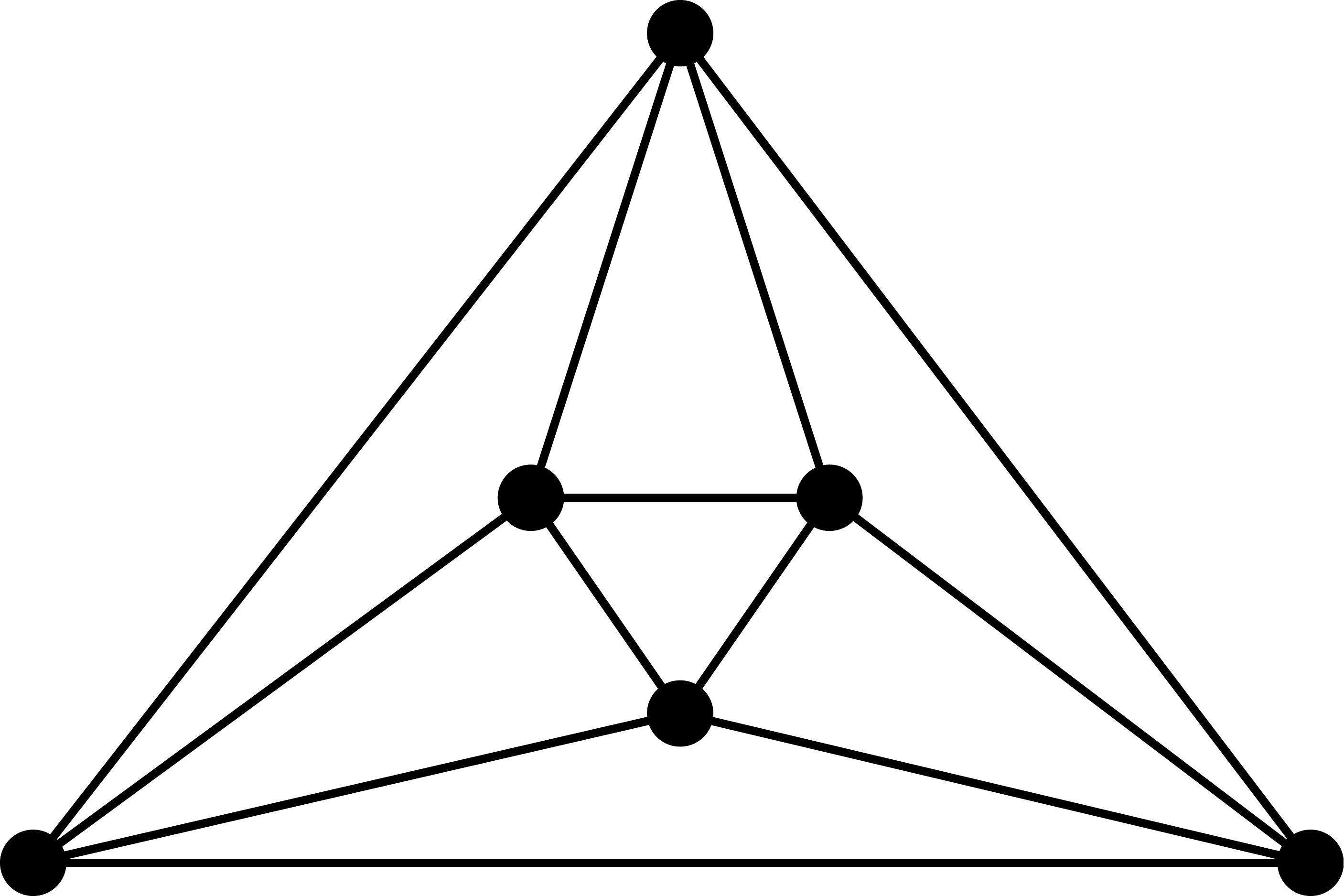}
\hspace{0.5in}
\includegraphics[height=1.3in]{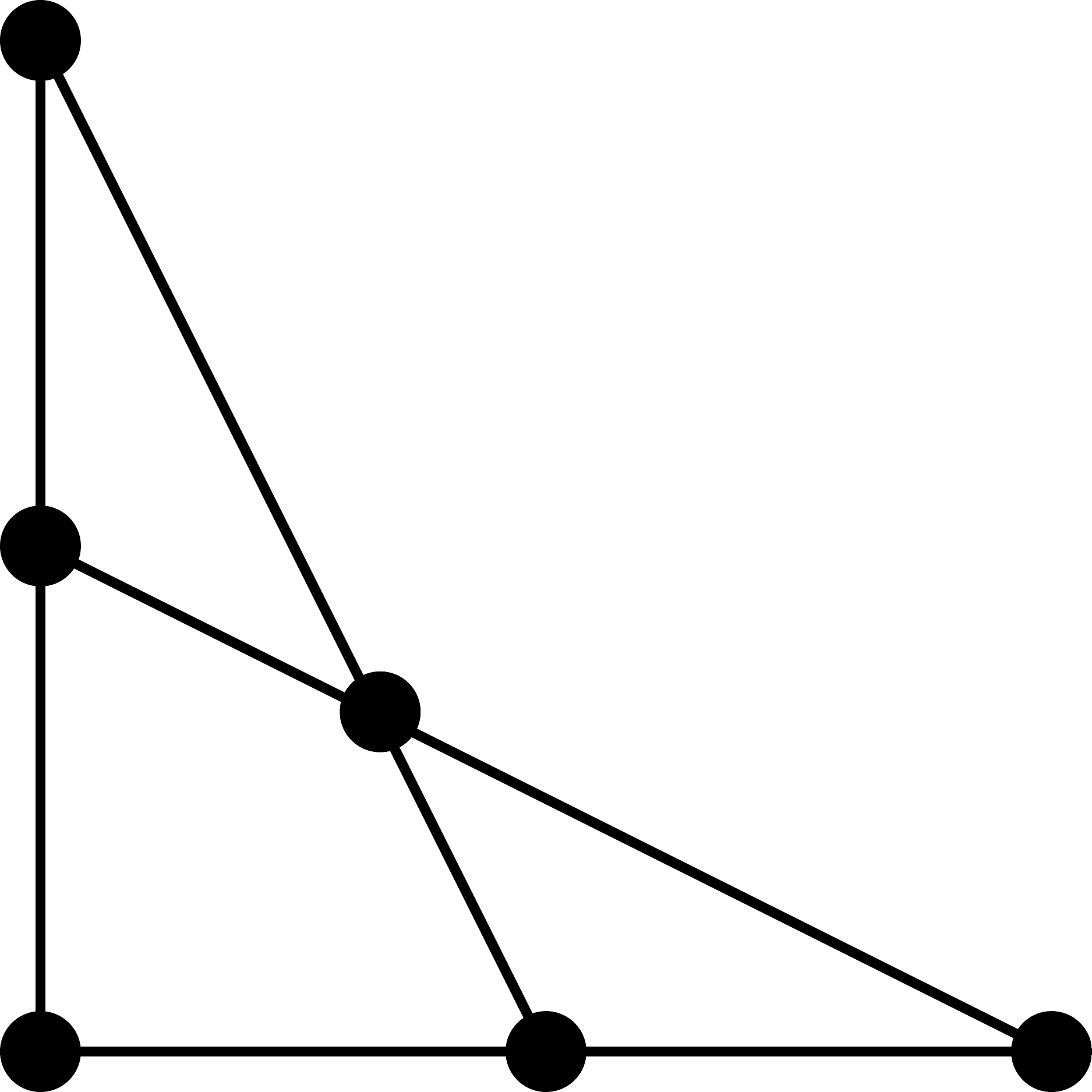}
% \begin{tikzpicture}[baseline=-4ex,
% roundnode/.style={circle, draw=green!60, fill=green!5, thick, minimum size=2mm}]
% \node[roundnode](1){};
% \node[roundnode](2)[right=1cm of 1]{};
% \node[roundnode](3)[right=1cm of 2]{};
% \node[roundnode](4)[above=1cm of 1]{};
% \node[roundnode](5)[above=1cm of 4]{};
% \node[roundnode](6)[above right = 1cm of 1]{};
% \draw[-] (1.east) -- (2.west);
% \draw[-] (2.east) -- (3.west);
% \draw[-] (1.north) -- (4.south);
% \draw[-] (4.north) -- (5.south);
% \draw[-] (2.110) -- (6.295);
% \draw[-] (6.125) -- (5.325);
% \draw[-] (4.345) -- (6.155);
% \draw[-] (6.330) -- (3.135);
% \end{tikzpicture}
\end{center}
\caption{The graph from Example~\ref{e:planarnotplanar} (left) and the 3-segment hypergraph from Example~\ref{e:3segment} (right).}
\label{f:3segment}
\end{figure}

%%%%%%%%%%%%%%%%%%%%%%%%%%%%%%%%%%%%%%%%%%%%%%%%%%%%%%%%%%%%%%%%%%%%%%%%%
\section{Coloring segment hypergraphs}%%%%%%%%%%%%%%%%%%%%%%%%%%%%%%%%%%%
\label{sec:rsegment}%%%%%%%%%%%%%%%%%%%%%%%%%%%%%%%%%%%%%%%%%%%%%%%%%%%%%
%raggedbottom%%%%%%%%%%%%%%%%%%%%%%%%%%%%%%%%%%%%%%%%%%%%%%%%%%%%%%%%%%%%

We now consider the following family of hypergraphs introduced in \cite{rsegment}.  Note that a graph $G$ is a 2-segment hypergraph if and only if $G$ has a sequential embedding in $\RR^2$.  

\begin{defn}[{\cite[Definition~1.1]{rsegment}}]\label{d:rsegment}
A hypergraph $H$ with vertex set in $\ZZ^2$ is called an \emph{$r$-segment hypergraph} if (i) every edge of $H$ consists of $r$ consecutive integer points and (ii) every line in $\RR^2$ contains at most one edge of $H$.  
\end{defn}

\begin{remark}\label{r:2segmentsameline}
The special case of Definition~\ref{d:rsegment} when $r = 2$ (that is, when $H$ is a graph)  has an additional hypothesis that Definition~\ref{d:sequential} does not, namely that each line in $\RR^2$ contain at most one edge.  It can be easily shown that the constructions in Theorem~\ref{t:2segmentrealizable} and~\ref{t:planarseq} also satisfy this property.  
\end{remark}

We say a vertex coloring of a hypergraph $H$ is a \emph{strong coloring} if no two vertices sharing an edge are the same color.  Likewise, we say a vertex coloring of $H$ is a \emph{weak coloring} if no edge is monochromatic, that is, if no edge consists entirely of vertices of a single color.  We denote the \emph{strong} and \emph{weak chromatic numbers} of $H$ by $\chi_s(H)$ and $\chi_w(H)$, respectively (that is, the smallest number of colors needed to strongly (resp.\ weakly) color the vertices of $H$).  
% In Chris' paper, they showed an upper bound for the weak chromatic number for $r$-segment hypergraphs. In particular, letting $H$ be an $r$-segment hypergraph, if $r = 2$, then $\chi_{w}(H) \le 4$. If $r = 3$, then $\chi_w(H) \le 3$. If $r \ge 4$, then $\chi_w(H) = 2$. We will show an upper bound for the strong chromatic number is $\chi_s(H) \le r^2$. However there are more intricacies.

Given an $r$-segment hypergraph $H$, \cite[Theorem~2.2]{rsegment} states that $\chi_w(H) \le 4$ if $r = 2$, $\chi_w(H) \le 3$ if $r = 3$, and $\chi_w(H) = 2$ if $r \ge 4$.  Additionally, \cite[Examples~2.3 and~2.4]{rsegment} demonstrate that each of these bounds is sharp by exhibiting 2-segment and 3-segment hypergraphs with weak chromatic numbers 4 and 3, respectively.  

In this section, we bound the strong chromatic number of $r$-segment hypergraphs in terms of $r$ (Theorem~\ref{t:strongbound}) and demonstrate that our bound is sharp (Proposition~\ref{p:strongsharp}).  Lastly, we demonstrate in Example~\ref{e:3segment} that the strong and weak chromatic numbers of an $r$-uniform hypergraph $H$ are not sufficent to determine if $H$ is an $r$-segment hypergraph for $r \ge 3$.  

We begin by recalling a particular family of hypergraphs from~\cite{rsegment}.  For each $k \ge 2$, let $\ZZ_k$ denote the additive group $\ZZ/k\ZZ = \{\overline{0}, \overline{1}, \ldots, \overline{k-1}\}$.  Let $Z_k$ denote the $k$-uniform hypergraph with vertex set $V(Z_k) = \ZZ_k^2$ and edge set
\[
E(Z_k) = \{\ol u + \ZZ_k \ol v : \ol u, \ol v \in \ZZ_k^2 \text{ and } |\ol u + \ZZ_k \ol v| = k\}
\]
consisting of all lines in $\ZZ_k^2$ with exactly $k$ points.  

\begin{thm}\label{t:strongbound}
If $H$ is an $r$-segment hypergraph with $r \ge 2$, then $\chi_s(H) \le r^2$.  
\end{thm}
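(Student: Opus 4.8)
The plan is to generalize the coloring used in the forward direction of Corollary~\ref{c:2segmentrealizable}, where a sequential graph in $\RR^d$ was properly $2^d$-colored by reducing vertex coordinates modulo~$2$. Here we work in $\RR^2$ but allow edges of size $r$, so I would color each vertex $x = (x_1, x_2) \in V(H)$ by its reduction
$$
c(x) = (\ol{x_1}, \ol{x_2}) \in (\ZZ/r\ZZ)^2.
$$
This uses exactly $r^2$ colors, so it suffices to show $c$ is a strong coloring, i.e.\ that the $r$ vertices of any edge receive distinct colors.

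First I would record the structure of an edge. By hypothesis~(i) of Definition~\ref{d:rsegment}, an edge of $H$ is a set of $r$ consecutive integer points on a line. If $v \in \ZZ^2$ denotes the primitive direction vector of that line (so that $\gcd(v_1, v_2) = 1$, as in Lemma~\ref{l:consecutive}), then consecutive integer points along the line differ exactly by $v$, and hence the edge has the form $\{p, p + v, \ldots, p + (r-1)v\}$ for some $p \in \ZZ^2$. The colors of the vertices indexed by $i \ne j$ coincide precisely when $(i - j)v \equiv (0,0) \pmod r$; since $0 \le i, j \le r - 1$, it is enough to rule this out for every integer $k$ with $1 \le k \le r - 1$ in place of $i - j$.

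The crux — and the step I expect to require the most care — is the number-theoretic claim that $kv \not\equiv (0,0) \pmod r$ whenever $v$ is primitive and $1 \le k \le r - 1$. I would prove this by contradiction: suppose $r \mid k v_1$ and $r \mid k v_2$, and set $g = \gcd(r, k)$. Then $r/g$ divides both $(k/g)v_1$ and $(k/g)v_2$, and since $\gcd(r/g, k/g) = 1$ this forces $r/g \mid v_1$ and $r/g \mid v_2$. Primitivity of $v$ then gives $r/g \mid \gcd(v_1, v_2) = 1$, so $g = r$, i.e.\ $r \mid k$, which is impossible for $1 \le k \le r - 1$. Hence the $r$ colors appearing on each edge are distinct, $c$ is a strong coloring, and $\chi_s(H) \le r^2$.

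Finally, I would remark that this argument never invokes hypothesis~(ii) of Definition~\ref{d:rsegment}; only the fact that each edge is an arithmetic progression with primitive common difference is used. As a consistency check, the case $r = 2$ recovers the bound $\chi_s(H) \le 4$ in agreement with Corollary~\ref{c:2segmentrealizable}.
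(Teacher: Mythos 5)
Your proof is correct, and it rests on the same coloring as the paper's: reduce each vertex's coordinates modulo $r$, so that the bound $r^2 = |(\ZZ/r\ZZ)^2|$ falls out immediately. The difference is in how the crucial fact --- that the $r$ vertices of any edge have pairwise distinct reductions mod $r$ --- gets established. The paper delegates this to \cite[Proposition~2.1]{rsegment}: the projection $\ZZ^2 \to \ZZ_r^2$ sends each edge of $H$ to an edge of the auxiliary hypergraph $Z_r$ (a line in $\ZZ_r^2$ with exactly $r$ points), so the $r$ vertices land on distinct points and $\chi_s(H) \le \chi_s(Z_r) \le r^2$. You instead prove the fact from scratch: an edge is an arithmetic progression $p, p+v, \ldots, p+(r-1)v$ with $v$ primitive, and your gcd argument (if $r \mid kv_1$ and $r \mid kv_2$ with $g = \gcd(r,k)$, then $r/g$ divides $\gcd(v_1, v_2) = 1$, forcing $r \mid k$, impossible for $1 \le k \le r-1$) is a correct and complete way to rule out $kv \equiv (0,0) \pmod{r}$. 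What the paper's route buys is brevity and a link to the $Z_r$ machinery that \cite{rsegment} develops for other purposes; what your route buys is a fully self-contained argument, and it makes explicit an observation the paper leaves implicit, namely that hypothesis (ii) of Definition~\ref{d:rsegment} (no line contains two edges) plays no role in the strong-coloring bound.
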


\begin{proof}
By \cite[Proposition~2.1]{rsegment}, the image of any edge of $H$ under the projection
$$\begin{array}{r@{}c@{}l}
\ZZ^2 &{}\longrightarrow{}& \ZZ_r^2 \\
(v_1 ,v_2) &{}\longmapsto{}& (\ol v_1, \ol v_2)
\end{array}$$
is an edge in $Z_r$.  In particular, any $r$ sequential vertices on an edge of $H$ map to distinct vertices of $Z_r$.  As such, any strong coloring of $Z_r$ induces a strong coloring of~$H$, and thus $\chi_s(H) \le \chi_s(Z_r) \le |\ZZ_r^2| = r^2$. 
% 
% We now show that $\chi_s(Z_r) = r^2$.  Let $\ol u, \ol v \in \ZZ_r^2$ with $\ol u \ne \ol v$ and set $d = \gcd(\ol u - \ol v)$, which must lie in $\{1, 2, \ldots, k-1\}$.  Set $\ol v' = \frac{1}{d}(\ol u - \ol v)$. Consider $\bar{u} + \ZZ_r \bar{v}'$. Since $\gcd(\bar{u},\bar{v}') = 1$, then $\bar{u}+ \ZZ_r^2 \bar{v}'$ is an edge containing $r$ vertices as well as $\bar{v}$. Hence for any two points in $\ZZ_r^2$, $Z_r$ has an edge that contains both. Hence $\chi_s(Z_r) = r^2$.
\end{proof}

\begin{remark}\label{r:strongweakr2}
If $r = 2$, we have $\chi_w(H) = \chi_s(H)$ for every $2$-segment (hyper)graph $H$.  In this case, the bounds in Theorem~\ref{t:strongbound} and \cite[Theorem~2.2]{rsegment} coincide.  
\end{remark}

We now show the bound in Theorem~\ref{t:strongbound} is sharp by exhibiting, for each $r \ge 2$, an $r$-segment hypergraph with strong chromatic number $r^2$ (see Figure~\ref{f:strongsharp} for a depiction of the resulting hypergraph for $r = 3$).  

\begin{figure}
\begin{center}
\includegraphics[height=2.0in]{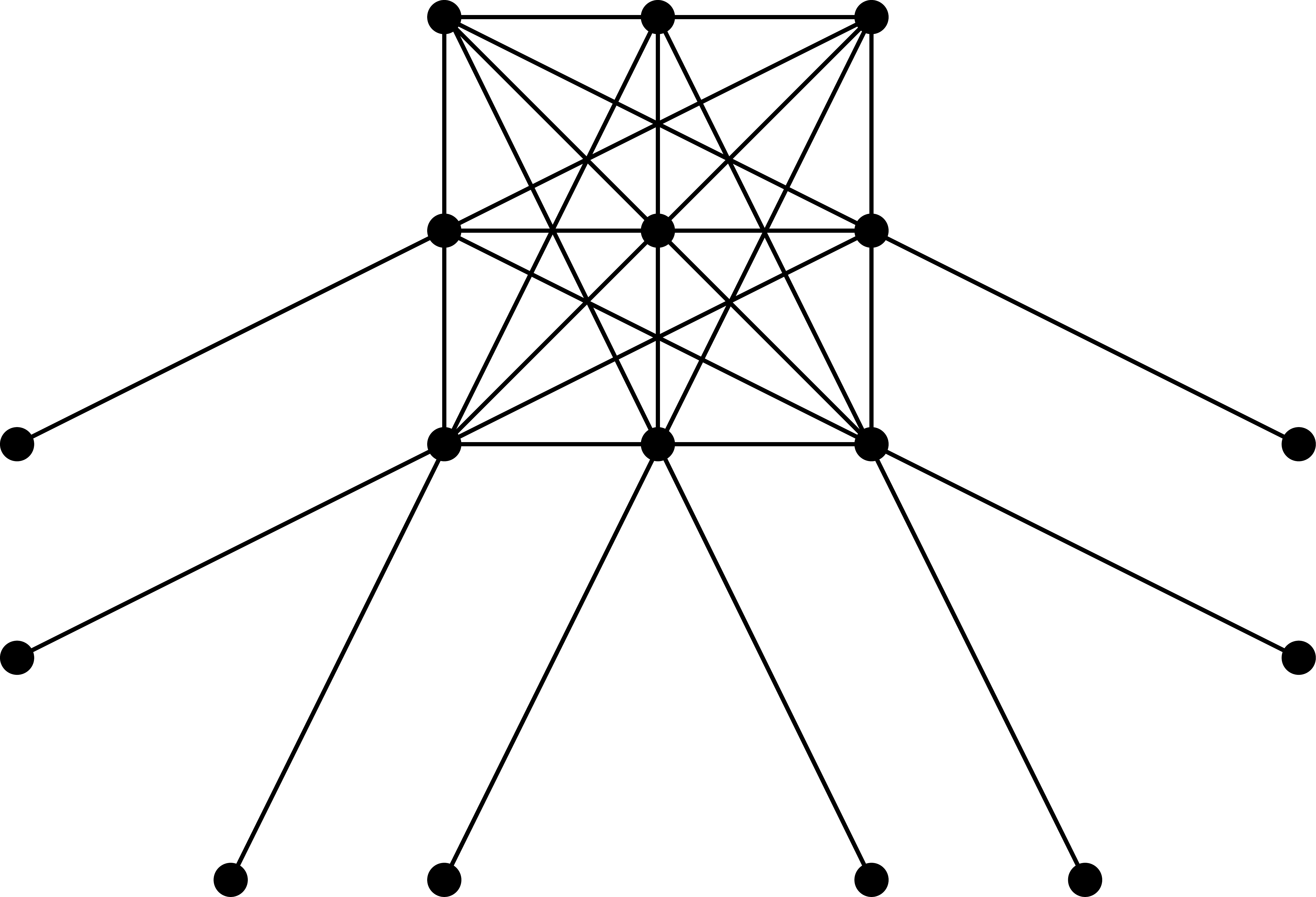}
% \begin{tikzpicture}[baseline=-4ex,
% roundnode/.style={circle, draw=green!60, fill=green!5, thick, minimum size=2mm}]
% \node[roundnode](1){};
% \node[roundnode](2)[right=1cm of 1]{};
% \node[roundnode](3)[right=1cm of 2]{};
% \node[roundnode](4)[above=1cm of 1]{};
% \node[roundnode](5)[above=1cm of 4]{};
% \node[roundnode](6)[above right = 1cm of 1]{};
% \draw[-] (1.east) -- (2.west);
% \draw[-] (2.east) -- (3.west);
% \draw[-] (1.north) -- (4.south);
% \draw[-] (4.north) -- (5.south);
% \draw[-] (2.110) -- (6.295);
% \draw[-] (6.125) -- (5.325);
% \draw[-] (4.345) -- (6.155);
% \draw[-] (6.330) -- (3.135);
% \end{tikzpicture}
\end{center}
\caption{The $3$-uniform hypergraph from Proposition~\ref{p:strongsharp}.}
\label{f:strongsharp}
\end{figure}

\begin{prop}\label{p:strongsharp}
Let $r \ge 2$. There is an $r$-segment hypergraph $H$ with $\chi_s(H) = r^2$.
\end{prop}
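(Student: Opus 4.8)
The plan is to match the upper bound $\chi_s(H) \le r^2$ of Theorem~\ref{t:strongbound} with a lower bound, by exhibiting a concrete $r$-segment hypergraph whose $2$-section (the graph on $V(H)$ joining two vertices whenever they share an edge) contains a clique on $r^2$ vertices. As orientation I would first record the companion fact that $\chi_s(Z_r)=r^2$: two points $\ol u, \ol w \in \ZZ_r^2$ lie on a common edge of $Z_r$ precisely when $\ol w - \ol u$ lies in a cyclic subgroup of order $r$, and \emph{every} element of $\ZZ_r^2$ lies in such a subgroup. For $r$ a prime power this is the statement that an element whose content is coprime to the prime generates a subgroup of full order, so one writes $\ol w - \ol u = g\ol v$ with $\ol v$ of order $r$; the general case follows by the Chinese remainder theorem. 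Hence the $2$-section of $Z_r$ is the complete graph on $\ZZ_r^2$, giving $\chi_s(Z_r)=r^2$. This identifies exactly the obstruction that the desired hypergraph must reproduce.

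For the construction I would use the observation that two lattice points $P,Q$ can be completed to an edge of an $r$-segment hypergraph --- that is, a run of $r$ consecutive integer points containing both --- if and only if $\gcd$ of their coordinate differences is at most $r-1$; indeed, by Lemma~\ref{l:consecutive} applied to the primitive step of the line through $P$ and $Q$, the closed segment $PQ$ carries $\gcd+1$ lattice points, and this is at most $r$ exactly when $\gcd \le r-1$. I would therefore look for a set $S$ of $r^2$ lattice points that is (i) in general position (no three collinear) and (ii) pairwise sequentially completable, meaning every pair has coordinate-difference $\gcd$ at most $r-1$. Given such an $S$, I would join each pair $P,Q\in S$ by a length-$r$ run of consecutive integer points containing both, placing the $r-2$ auxiliary points so as to create no further collinearities among the points of $S$. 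Condition~(i) forces the $\binom{r^2}{2}$ connecting lines to be pairwise distinct, so distinct pairs yield distinct edges and the one-edge-per-line axiom of Definition~\ref{d:rsegment} holds; condition~(ii) guarantees every pair of $S$ co-occurs in an edge, so the $2$-section of the resulting hypergraph $H$ contains $K_{r^2}$. Thus $\chi_s(H)\ge r^2$, and with Theorem~\ref{t:strongbound} we conclude $\chi_s(H)=r^2$.

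The main obstacle is establishing existence of the configuration $S$, where the two requirements pull against one another. The bounded-$\gcd$ condition~(ii) forces the points to be pairwise distinct modulo every prime $\ell \ge r$, which is most naturally arranged by confining the coordinates to a narrow window; but a window of width $r$ contains only the full $r\times r$ grid, which violates general position once $r\ge 3$, so $S$ must spread out and the $\gcd$ bounds must then be imposed by hand. I expect to resolve this by a number-theoretic placement rather than a naive greedy selection (a union bound over the $\gcd$ conditions is only borderline). A promising route is to start from points on a conic, where no three are ever collinear for free, and then select and rescale a subset so that the residue conditions modulo the finitely many ``critical'' moduli in $[r,2r)$ force every pairwise $\gcd$ below $r$. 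Once $S$ is in hand, verifying the hypergraph axioms and the clique in the $2$-section is routine; the resulting configuration for $r=3$ is the hypergraph depicted in Figure~\ref{f:strongsharp}.
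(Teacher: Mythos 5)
Your reduction of the problem to finding a set $S$ of $r^2$ lattice points that is (i) in general position and (ii) pairwise at $\gcd$-distance at most $r-1$ is exactly where the proof breaks down: you never establish that such an $S$ exists, and you acknowledge this yourself, offering only speculative routes. The gap is not minor, because conditions (i) and (ii) are in severe tension. By the no-three-in-line bound, a box of side $n$ contains at most $2n$ points in general position (each horizontal grid line carries at most two of them), so $S$ would have to spread over a box of side roughly $r^2/2$, and then coordinate differences of size up to $\sim r^2$ must all have $\gcd$ less than $r$ --- far more ``critical moduli'' than the window $[r,2r)$ you mention (note also that $\gcd \ge r$ does not imply a common divisor in $[r,2r)$: the gcd can be a large prime, or a product of small primes such as $6 \ge r = 4$). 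Your suggested conic construction also fails as stated: on the parabola $y = x^2$, the difference of $(s,s^2)$ and $(t,t^2)$ is $(t-s)(1,\,t+s)$, whose $\gcd$ is exactly $|t-s|$, so condition (ii) forces all parameters into a window of width $r-1$, yielding at most $r$ points rather than $r^2$.

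The paper sidesteps all of this by observing that general position is not needed. It takes $B = \{0,1,\ldots,r-1\}^2$, the full $r \times r$ grid, collinearities and all. For each line $L$ through at least two points of $B$, the set $L \cap B$ spans at most $r$ consecutive lattice points of $L$ (the primitive direction vector of $L$ has entries of absolute value at most $r-1$, while the extreme points of $L \cap B$ differ by at most $r-1$ in each coordinate), so one may choose a single edge $E_L$ consisting of $r$ consecutive lattice points with $E_L \supset L \cap B$. Taking exactly one such edge per line yields an $r$-segment hypergraph in which every two points of $B$ share an edge: collinear triples in $B$ are harmless precisely because all points of $B$ on a given line sit inside that line's unique edge, and the one-edge-per-line axiom of Definition~\ref{d:rsegment} holds by construction. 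This gives $\chi_s(H) \ge r^2$ immediately, and Theorem~\ref{t:strongbound} finishes, exactly as in your final step. Your framing --- one edge per \emph{pair}, which forces distinct lines for distinct pairs --- is what created the obstruction; one edge per \emph{line} dissolves it.
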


\begin{proof}
Let $B = \{0, 1, \ldots, r - 1\}^2 \subset \ZZ^2$, an $r \times r$ grid of integer points.  Consider all lines in $\RR^2$ passing through at least two of the points in $B$.  Each such line $L$ contains at most $r$ points in $B$, since there are at most $r$ consecutive points in $B$ in any given direction.  Let $H$ denote the hypergraph with one edge $E_L$ for each line $L$ consisting of $r$ sequential integer points in $L$ satisfying $E_L \supset L \cap B$.  By~construction, $H$ is an $r$-segment hypergraph with the property that any two vertices in $B$ share an edge in $H$.  As such, under any strong coloring of $H$, the vertices in $B$ must have distinct colors.  This implies $\chi_s(H) \ge r^2$, so by Theorem~\ref{t:strongbound}, $\chi_s(H) = r^2$.
\end{proof}

% We now give an example of a 3-uniform hypergraph with strong chromatic number less than 9 that cannot be embedded as a 3-segment hypergraph.
% We contrast the $r=2$ case by give an example of a 3-uniform hypergraph, $H$ with $\chi_s(H) < r^2$, but $H$ is not a 3-segment hypergraph.
We conclude with an example demonstrating that $r$-segment hypergraphs for $r > 2$ cannot be characterized in terms of their (strong or weak) chromatic numbers.  

\begin{example}\label{e:3segment}
Let $H$ be the hypergraph with vertex set $V(H) = \{1,2,3,4,5,6\}$ and edge set $E(H) = \{\{1,2,3\},\{1,4,5\},\{2,5,6\},\{3,4,6\}\}$ depicted in Figure~\ref{f:3segment}. It is easy to check that $\chi_s(H) = 3$, but no $3$-segment hypergraph is isomorphic to $H$.  Indeed, every 2 edges of $H$ intersect, and no 3 edges of $H$ have a common intersection, so 
% any sequential embeding must look like the one depicted in Figure~\ref{f:3segment}.  However, such 
a sequential embedding in the plane is impossible by \cite[Lemma~4.5]{rsegment}.  
% ($b_1 = b_2 = b_3 = b_4 = 1$ in this case).  
\end{example}

%%%%%%%%%%%%%%%%%%%%%%%%%%%%%%%%%%%%%%%%%%%%%%%%%%%%%%%%%%%%%%%%%%%%%%%%%
%%%%%%%%%%%%%%%%%%%%%%%%%%%%%%%%%%%%%%%%%%%%%%%%%%%%
%%%%%%%%%%%%%%%%%%%%%%%%%%%%%%%%%%%%%%%%%%%%%%%%%%%%%%%%%%%%%%%%%%%%%%%%%

%%%%%%%%%%%%%%%%%%%%%%%%%%%%%%%%%%%%%%%%%%%%%%%%%%%%%%%%%%%%%%%%%%%%%%%%%
\end{document}